\def\Hbar{\bar{H}}
\def\Abar{\bar{A}}
\newtheorem{theorem}{Theorem}
\newtheorem{lemma}[theorem]{Lemma}
\newtheorem{corollary}[theorem]{Corollary}
\theoremstyle{remark}
\newtheorem{example}[theorem]{Example}
\newtheorem{examples}[theorem]{Examples}
\renewcommand{\theenumi}{\roman{enumi}}
\numberwithin{theorem}{section}
\numberwithin{equation}{section}
\def\set#1#2{\{#1:#2\}}
\def\gend#1{\langle #1\rangle}
\def\biggend#1{\bigl\langle#1\bigr\rangle}
\let\iso\cong
\def\sset{\subseteq}
\def\subgp{\leq}
\def\normal{\trianglelefteq}
\def\semidirect{\rtimes}
\def\actson{\curvearrowright}
\def\class{\mathop{\mathrm{class}}\nolimits}
\def\exponent{\mathop{\mathrm{exponent}}\nolimits}
\def\rank{\mathop{\mathrm{rank}}\nolimits}
\def\Aut{\mathop{\mathrm{Aut}}\nolimits}
\def\Z{\mathbb{Z}}
\def\SL{\mathrm{SL}}
\def\Ab{\mathfrak{Ab}}
\def\olex{{\mathrm{olex}}}
\def\lex{{\mathrm{lex}}}
\def\A{\mathcal{A}}
\def\Ao{\A_o}
\def\Ar{\A_r}
\def\Ae{\A_e}
\def\Alex{\A_\lex}
\def\Aolex{\A_\olex}
\def\AOEzeta{\A_{O,E,\zeta}}
\def\IA{I_{\!\A}}
\def\Io{I_{o}}
\def\Ir{I_{r}}
\def\Ie{I_{e}}
\def\Ilex{I_\lex}
\def\Iolex{I_\olex}
\def\IOEzeta{I_{O,E,\zeta}}
\def\JA{J_{\!\A}}
\def\Jo{J_{o}}
\def\Jr{J_{r}}
\def\Je{J_{e}}
\def\Jlex{J_\lex}
\def\Jolex{J_\olex}
\def\ZJ{Z\kern-.1em J}
\def\ZJo{\ZJ_{o}}
\def\ZJr{\ZJ_{r}}
\def\ZJlex{\ZJ_\lex}
\def\ZJolex{\ZJ_\olex}
\def\OmegaZJ{\Omega\ZJ}
\def\OmegaZJr{\OmegaZJ_{r}}
\def\OmegaZJe{\OmegaZJ_{e}}
\begin{document}

\title{Variations on Glauberman's ZJ Theorem}
\author{Daniel Allcock}
\thanks{Supported by Simons Foundation 
Collaboration Grant 429818}
\address{Department of Mathematics\\University 
of Texas, Austin}
\email{allcock@math.utexas.edu}
\urladdr{http://www.math.utexas.edu/\textasciitilde allcock}
\keywords{Thompson subgroup, Glauberman Replacement, ZJ Theorem}
\subjclass[2010]{20D25 (20D15, 20D20)}
%\date{July 18, 2019}
%\date{July 20, 2019}
%\date{Aug 9, 2019}
%\date{Sep 8, 2019}
%\date{February 13, 2020}
%\date{March 4, 2020}
%\date{March 10, 2020}
%\date{April 5, 2020}
%\date{July 11, 2020}
\date{April 3, 2021}

\begin{abstract}
    We give a new proof of Glauberman's ZJ Theorem,
    in a form that clarifies the choices involved
    and offers more choices than classical treatments.
    In particular, we introduce two new ZJ-type
    subgroups of a $p$-group~$S$, that contain
    $\ZJr(S)$ and $\ZJo(S)$ respectively and can be
    strictly larger.
\end{abstract}

\maketitle

\noindent
Glauberman's ZJ Theorem 
is a basic technical
tool in  finite group theory
\cite[Theorem~A]{Glauberman}.  
For example, it plays a major role
in the classification of simple groups having abelian
or dihedral Sylow $2$-subgroups.  
There are several versions of the theorem, 
depending on how one defines the Thompson subgroup.
We develop the theorem in a way that clarifies the 
choices involved, and offers more choices
than classical treatments.
In this paper all groups are taken to be finite.

Writing $S$ for a $p$-group,
the following are new.  First, 
Theorem~\ref{ThmAxiomaticZJ}
is an
``axiomatic'' version of the ZJ Theorem.
Second, we construct ZJ-type groups 
$\ZJlex(S)$ and $\ZJolex(S)$, which
contain $\ZJr(S)$
and $\ZJo(S)$ respectively, and can be strictly larger.
Third, we establish the ``normalizers grow'' property of
the Thompson-Glauberman replacement process,
and a consequence involving the Glauberman-Solomon
group $D^*(S)$; see
Theorems~\ref{ThmReplacement}\eqref{ItemReplacementNormalizersGrow}
and~\ref{ThmCentralizesDstar}.

\section{Introduction}
\label{SecIntro}

\noindent
Suppose $p$ is a prime,
$S$ is a $p$-group, $\Ab(S)$ is the set of
abelian subgroups of~$S$, and $\A\sset\Ab(S)$.
We set 
$$
\JA:=\gend{A:A\in\A}
\qquad
\IA:=\cap_{A\in\A}A
\qquad
\hbox{($\IA=1$ if $\A=\emptyset$).}
$$
$\JA$ is a sort of generalized Thompson subgroup,
and $\IA$ lies in its center.
For $P\subgp S$ we define $\A|_P$ as 
$\set{A\in\A}{A\subgp P}$
and $I_{\!\A|P}$ as $I_{\!\A|_P}$.
For other notation, and the definition of a $p$-stable action, see 
Section~\ref{SecPreliminaries}.

\begin{theorem}[``Axiomatic'' ZJ Theorem]
    \label{ThmAxiomaticZJ}
    Suppose $p$ is a prime, $S$ is a $p$-group and $G$ is a 
    group satisfying
    \begingroup
        \renewcommand{\theenumi}{\alph{enumi}}
        \begin{enumerate}
            \item
                \label{ItemSaSylow}
                $S$ is a Sylow $p$-subgroup of~$G$.
            \item
                \label{ItemZJFaithfulOnOp}
                $C_G(O_p(G))\subgp O_p(G)$.
            \item
                \label{ItemZJStability}
                $G$ acts $p$-stably on every 
                normal $p$-subgroup of~$G$.
        \end{enumerate}
    \endgroup
    \noindent   
    Then $\IA\normal G$ if $\A\sset\Ab(S)$
    has the following properties:
    \begin{description}
    \item[invariance (in~$S$, for~$G$)]
        $\forall P\normal S$, $I_{\!\A|P}$ is 
            $N_G(P)$-invariant.
    \item[replacement (in~$S$)]
        For every $B\normal S$ with class${}\leq2$,
        if there exist 
        members of $\A$ that contain $[B,B]$ but not~$B$,
        then
        $B$ normalizes one of them.
    \end{description}
    Furthermore, 
    $\IA$ is 
    characteristic in~$G$ if it is characteristic
    in~$S$.
\end{theorem}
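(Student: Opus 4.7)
My plan is a minimal-counterexample argument. Assume $(G,S,\A)$ satisfies (a)--(c), invariance, and replacement, yet $\IA\not\normal G$, with $|G|$ minimal. Since $\IA$ is a $p$-group, the hypotheses and the two $\A$-properties descend to $G/O_{p'}(G)$ and normality of $\IA$ pulls back, so I may assume $O_{p'}(G)=1$. Hypothesis~(b) then gives $C_G(R)=Z(R)$, where $R:=O_p(G)$. Next I would note that for any proper $H$ with $S\subgp H<G$ inheriting (b) and (c)---for instance subgroups of the form $N_G(Q)$ with $Q$ a suitable characteristic subgroup of $S$ contained in $R$---invariance is inherited (an $N_G(P)$-invariant object is automatically $N_H(P)$-invariant for any $P\normal S$) and replacement is intrinsic to $S$, so minimality yields $\IA\normal H$. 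In particular $N_G(S)$ normalizes $\IA$, which also follows directly from invariance with $P=S$.

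The heart of the argument is to show $\IA\subgp Z(R)$, which would force $\IA\normal G$ and the desired contradiction. Choose a $G$-invariant abelian $V\subgp Z(R)$ (for instance $V=\Omega_1 Z(R)$), so that $G/C_G(V)$ acts faithfully on $V$ modulo~$R$. For each $A\in\A$, both $V$ and $A$ live in the $p$-group $S$; one forms the $S$-normal closure $B$ of $VA$, which in the interesting case has class~$\leq 2$ because $[V,A]\subgp V$ is central in~$B$. When $B\not\subgp A$ but $[B,B]$ lies in some member of~$\A$, replacement returns an $A'\in\A$ normalized by~$B$; iterating inside a well-chosen poset of abelian $p$-subgroups of~$S$, together with the restriction that $p$-stability of the $G$-action on $V$ places on the quadratic action of $A$ on~$V$, should force every member of~$\A$ to contain~$V$, i.e.\ $V\subgp\IA\subgp Z(R)$, contradicting $\IA\not\normal G$. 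The main obstacle is exactly this interlocking use of replacement and $p$-stability, which is the crux of Glauberman's original argument; the axiomatization is set up precisely so that ``invariance'' and ``replacement'' are the only combinatorial properties of~$\A$ that need to be invoked.

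For the final clause, suppose $\IA$ is characteristic in~$S$, and let $\phi\in\Aut(G)$. By Sylow's theorem, $\phi(S)=S^g$ for some $g\in G$, so composing $\phi$ with conjugation by~$g$ yields an automorphism $\psi$ of $G$ stabilizing~$S$. The restriction $\psi|_S\in\Aut(S)$ fixes $\IA$ because $\IA$ is characteristic in~$S$; and conjugation by $g$ preserves $\IA$ because $\IA$ is now known to be normal in $G$. Undoing the composition gives $\phi(\IA)=\IA$.
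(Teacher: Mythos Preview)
Your final paragraph, handling the ``characteristic in $G$'' clause via Sylow and the Frattini decomposition of $\Aut(G)$, is fine and matches the paper's one-line remark.

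The main argument, however, has real gaps. First, the stated goal ``$\IA\leq Z(R)$ forces $\IA\normal G$'' is not justified: a subgroup of the normal abelian group $Z(R)$ need not be normal in~$G$, and you give no mechanism (e.g.\ $\A|_R=\A$ together with invariance at $P=R$) that would close this. Second, and more seriously, your construction of $B$ as the $S$-normal closure of $VA$ does not give class${}\leq2$ in general: the $S$-conjugates of $A$ can generate an arbitrary $p$-group, and $[V,A]\leq V$ only controls commutators involving~$V$, not commutators among the $A^s$. Without class${}\leq2$ you cannot invoke the replacement axiom. Third, the sentence ``iterating inside a well-chosen poset \dots\ should force every member of $\A$ to contain~$V$'' is a hope rather than an argument; you have not explained how $p$-stability on~$V$ and replacement interact, nor how the invariance hypothesis is ever used in your scheme. (Your own closing remark concedes that this ``interlocking'' is exactly the crux you have not resolved.) A minor point: the reduction $O_{p'}(G)=1$ is automatic from hypothesis~(b), so it is not a genuine step.

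For contrast, the paper avoids all of these difficulties by a different architecture. Instead of a minimal counterexample, it \emph{grows} a normal subgroup of~$G$ inside~$\IA$: given $W\normal G$ with $W<\IA$, let $X\leq\IA$ be the preimage of $(I/W)^S$, and set $B=\langle X^G\rangle$. The point of this choice is that $[X,S]\leq W$, so $[B,O_p(G)]\leq W$ and hence $[B,B]\leq W\leq Z(B)$; thus class${}\leq2$ is \emph{built in}, not hoped for. Replacement then produces an $A\in\A$ normalized by~$B$, and $p$-stability of $G$'s action on $B$ puts $A$ inside $P:=O_p(G\curvearrowright B)\cap S$. Invariance is used exactly once, at this $P$, together with a Frattini argument $G=C_G(B)P\,N_G(P)$, to show $B\leq I_{\A|P}$; this contradicts the choice of~$A$. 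The specific $P$ and the explicit role of the Frattini argument are the ideas your sketch is missing.
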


Part of
the point of  ZJ-type theorems is to specify a subgroup
of~$S$ which will be characteristic in suitable~$G$, 
without referring to~$G$.  
We will say that a subgroup of $S$ has the
\emph{Glauberman property} (for~$S$)
if it is characteristic
in any group $G$ satisfying 
\eqref{ItemSaSylow}--\eqref{ItemZJStability}.
Replacing $N_G(P)$
with $\Aut(P)$ in the definition of invariance,
and quoting the theorem, lets us omit mention of~$G$:

\begin{corollary}
    \label{CorZJ}
    Suppose $p$ is a prime, $S$ is a $p$-group, and
    $\A\sset\Ab(S)$ satisfies {\bf replacement} (in~$S$)
    and also
    \begin{description}
        \item[full invariance (in~$S$)]
            $\forall P\normal S$, $I_{\!\A|P}$ is 
            $Aut(P)$-invariant.
    \end{description}
    Then $\IA$ has the Glauberman property.
    \qed
\end{corollary}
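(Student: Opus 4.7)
The plan is to derive the corollary directly from Theorem~\ref{ThmAxiomaticZJ} by showing that \textbf{full invariance} (in~$S$) is just a version of \textbf{invariance} (in~$S$, for~$G$) that is independent of the ambient group~$G$, and that it moreover delivers the extra hypothesis needed for the ``furthermore'' clause.

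First, I would fix an arbitrary group $G$ satisfying \eqref{ItemSaSylow}--\eqref{ItemZJStability} and verify that $\A$ satisfies the hypotheses of Theorem~\ref{ThmAxiomaticZJ} relative to this~$G$. The \textbf{replacement} hypothesis is already assumed. For \textbf{invariance}, observe that for any $P\normal S$, conjugation gives a homomorphism $N_G(P)\to\Aut(P)$; hence every $\Aut(P)$-invariant subgroup of~$P$ is automatically $N_G(P)$-invariant. Since full invariance supplies $\Aut(P)$-invariance of $I_{\!\A|P}$, we get $N_G(P)$-invariance for free. Theorem~\ref{ThmAxiomaticZJ} then yields $\IA\normal G$.

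Next, to upgrade ``normal'' to ``characteristic'', I would invoke the ``furthermore'' clause of Theorem~\ref{ThmAxiomaticZJ}, which only requires that $\IA$ be characteristic in~$S$. Specializing full invariance to $P=S$, note that $\A|_S=\A$ (every $A\in\A$ is already contained in~$S$), so $I_{\!\A|S}=\IA$, and the hypothesis asserts precisely that $\IA$ is $\Aut(S)$-invariant, i.e.\ characteristic in~$S$. Feeding this back into the theorem gives that $\IA$ is characteristic in~$G$.

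Since $G$ was an arbitrary group satisfying \eqref{ItemSaSylow}--\eqref{ItemZJStability}, this is the Glauberman property for $\IA$. There is no real obstacle here: the content of the corollary is entirely the observation that ``$\Aut(P)$-invariance'' is an intrinsic, $G$-free strengthening of ``$N_G(P)$-invariance'', plus the trivial specialization $P=S$ to extract characteristicness in~$S$.
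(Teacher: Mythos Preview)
Your proposal is correct and follows exactly the approach the paper intends: the paper states the corollary with a bare \qed, having just remarked that ``replacing $N_G(P)$ with $\Aut(P)$ in the definition of invariance, and quoting the theorem, lets us omit mention of~$G$.'' Your write-up simply unpacks this remark, including the specialization $P=S$ needed to feed the ``furthermore'' clause; there is nothing to add.
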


\noindent
These results allow $p=2$, but in this case 
no~$\A$ satisfying the conditions is known.
Also, every $\A$ we consider has the following property,
much stronger even than full invariance:
\begin{description}
    \item[completeness (in~$S$)]
        $\A$ contains every subgroup of~$S$ that
        is isomorphic to a member of~$\A$.
\end{description}

\begin{examples}
    The following subsets of $\Ab(S)$ are obviously
    complete in~$S$.  The first three are classical
    and the rest are new.
    We will abbreviate $I_{\!\A_{\cdots}(S)}$ and
    $J_{\!\A_{\cdots}(S)}$ to $I_{\cdots}(S)$ and $J_{\cdots}(S)$.
    \begin{align*}
        \Ao(S)
        &{}=
        \set{A\in\Ab(S)}
        {\hbox{$|A|\geq|A'|$ for all $A'\in\Ab(S)$}}
        \\
        \Ar(S)
        &{}=
        \set{A\in\Ab(S)}{\rank(A)=\rank(S)}
        \\
        \Ae(S)
        &{}=
        \set{A\in\Ar(S)}{\hbox{$A$ is elementary abelian}}
        \\
        \Alex(S)
        &{}=
        \set{A\in\Ab(S)}
        {\hbox{$A \geq_\lex A'$ for all $A'\in\Ab(S)$}}
        \\
        \Aolex(S)
        &{}=
        \set{A\in\Ao(S)}
        {\hbox{$A \geq_\lex A'$ for all $A'\in\Ao(S)$}}
        \\
        \AOEzeta(S)
        &{}=
        \set{A\in\Ab(S)}{\hbox{$|A|=p^O$, 
        $\exponent(A)\leq p^E$
        and $A\geq_\lex\zeta$}}
    \end{align*}
    In the last case, $O,E\in\Z$
    and $\zeta=(\zeta_1,\zeta_2,\dots)$ is a 
    sequence
    of integers.
    For sequences 
    $\zeta=(\zeta_1,\zeta_2,\dots)$
    and 
    $\zeta'=(\zeta_1',\zeta_2',\dots)$,
    $\zeta\geq_\lex\zeta'$
    refers to the usual lexicographic
    order.  
    When an abelian $p$-group $A$ appears on one side of $\geq_\lex$,
    the comparison refers to the sequence
    $$
    (\omega_1(A),\omega_2(A),\dots)
    :=
    \bigl(|\Omega_1(A)|,|\Omega_2(A)|,\dots\bigr).
    $$
    If $A,A'$ are abelian groups of the same
    order, then we think of $A>_\lex A'$ 
    as ``$A$ is closer to being
    elementary abelian than~$A'$ is.''  
\end{examples}

\begin{theorem}
    \label{ThmReplacementExamples}
    Suppose $p$ is an odd prime, $S$ is a $p$-group,
    and $D\normal S$. Then
    $\Ao(D)$, $\Ar(D)$, $\Ae(D)$, $\Alex(D)$,
    $\Aolex(D)$ and $\AOEzeta(D)$ (for any fixed
    $O,E,\zeta$) have the replacement property
    in~$S$.
\end{theorem}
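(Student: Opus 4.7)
My plan is to treat all six families uniformly by invoking the refined Thompson--Glauberman replacement theorem (Theorem~\ref{ThmReplacement}) of this paper. Fix one of the six families, call it $\A$, and let $B\normal S$ of class${}\leq2$ have the property that some $A\in\A$ contains $[B,B]$ but not $B$. Then I expect Theorem~\ref{ThmReplacement} to furnish an abelian subgroup $A^*\sset D$, with $B\sset N_S(A^*)$, $[B,B]\sset A^*$, $B\not\sset A^*$, and every preservation property needed below: $|A^*|=|A|$, $\exponent(A^*)\leq\exponent(A)$, $A^*\geqlex A$, and $A^*$ elementary abelian whenever $A$ is. Granted this output, the replacement property for each family follows by a one-line verification that $A^*\in\A$.

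For $\Ao(D)$, equal order preserves maximality. For $\Ar(D)$, the lex inequality gives $\omega_1(A^*)\geq\omega_1(A)$, hence $\rank(A^*)\geq\rank(A)=\rank(D)$, while $\rank(A^*)\leq\rank(D)$ holds because $A^*\sset D$. For $\Ae(D)$, the elementary-abelian hypothesis on $A$ forces $\omega_1(A)=|A|=|A^*|$, so $\omega_1(A^*)\geq|A^*|$ makes $A^*$ elementary abelian, and then $A^*\in\Ar(D)$ by the previous case. For $\Alex(D)$ and $\Aolex(D)$, transitivity of $\geqlex$ suffices, together with $|A^*|=|A|$ to keep $A^*$ in $\Ao(D)$ in the second case. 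For $\AOEzeta(D)$, the three defining conditions $|A^*|=p^O$, $\exponent(A^*)\leq p^E$, and $A^*\geqlex\zeta$ are preserved one at a time by the order, exponent, and lex guarantees.

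The whole difficulty thus lies in Theorem~\ref{ThmReplacement}, not in the present statement. Its substantive content is to produce a single $A^*$ that simultaneously lies in $D$, is normalized by $B$, and is lex-at-least $A$ without inflating order or exponent. Containment in $D$ requires care because $B$ need not lie in $D$, but is arrangeable since $A\sset D$ and $[B,A]\sset[S,D]\sset D$, so a construction of $A^*$ that combines $A$ with the action of $B$ on $A$ automatically stays in $D$. The simultaneous control of the whole sequence $(\omega_i)$ is the lex refinement specific to this paper, and is what unlocks the $\Alex$, $\Aolex$, and $\AOEzeta$ cases beyond the classical $\Ao$, $\Ar$, $\Ae$ ones; this is the main obstacle inside Theorem~\ref{ThmReplacement}, and hence the main obstacle behind the present statement.
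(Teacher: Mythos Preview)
Your overall shape matches the paper's---reduce everything to Theorem~\ref{ThmReplacement} and then check, family by family, that the output $A^*$ still belongs to~$\A$. But there is a gap in what you expect the replacement theorem to hand you. A single application of Theorem~\ref{ThmReplacement} does \emph{not} produce an $A^*$ with $B\subgp N_S(A^*)$. It only says: either $B$ already normalizes $A$, or for some $b\in N_B(N_S(A))-N_B(A)$ the subgroup $A^*=(A\cap A^b)[A,b]$ has $N_S(A^*)\gneq N_S(A)$ (and $A^*\cap B\gneq A\cap B$, and the order/lex/exponent controls you list). So one invocation only advances you one step toward $B$-invariance.

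The paper closes this gap by a maximality argument: among all members of $\A$ that contain $[B,B]$ but not $B$ (and lie in $\gend{U^S}\subgp D$, which pins down containment in $D$), pick $A$ with $|A\cap B|$ maximal; if $B$ failed to normalize it, Theorem~\ref{ThmReplacement}\eqref{ItemReplacementIntersectionGrows} would give $A^*\in\A$ with $|A^*\cap B|>|A\cap B|$, a contradiction. Equivalently you could iterate, using either \eqref{ItemReplacementIntersectionGrows} or \eqref{ItemReplacementNormalizersGrow} for termination, but you must also carry along the facts that at each step the new group still lies in $D$, still lies in $\A$, still contains $[B,B]$, and still fails to contain $B$ (the last via \eqref{ItemReplacementIntersectionGrows}). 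Your remark that ``the whole difficulty lies in Theorem~\ref{ThmReplacement}'' is therefore slightly off: the present proof has exactly this one extra idea. Your handling of $A^*\subgp D$ and your case-by-case check that $A^*\in\A$ are fine and agree with the paper.
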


\begin{corollary}
    \label{CorGlaubermanExamples}
    Every one of
    $\Io(S)$, $\Ir(S)$, $\Ie(S)$, $\Ilex(S)$,
    $\Iolex(S)$ and $\IOEzeta(S)$ has
    the Glauberman property for~$S$.
    \qed
\end{corollary}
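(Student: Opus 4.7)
The plan is to apply Corollary~\ref{CorZJ} to each of the six families, which reduces the task to verifying both \textbf{replacement} (in~$S$) and \textbf{full invariance} (in~$S$). Assuming $p$ is odd (as the replacement hypothesis requires in this framework), replacement is immediate from Theorem~\ref{ThmReplacementExamples} specialized to $D=S$, which is tautologically normal in itself. What remains is full invariance.

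Full invariance I would extract from the \textbf{completeness} property satisfied by each family. Fix $P \normal S$ and $\phi \in \Aut(P)$. For any $A \in \A|_P$, the image $\phi(A)$ lies in $P \leq S$ and is isomorphic to $A \in \A$, so completeness of $\A$ in~$S$ forces $\phi(A) \in \A$; since $\phi(A) \leq P$, in fact $\phi(A) \in \A|_P$. Hence $\Aut(P)$ permutes $\A|_P$, so the intersection $I_{\!\A|P}$ is $\Aut(P)$-invariant, which is exactly full invariance. Corollary~\ref{CorZJ} then gives the Glauberman property for each $\IA$.

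The main obstacle one might fear is full invariance, since $\Aut(P)$ can be much wilder than $N_G(P)$; but completeness disposes of it essentially for free, and indeed the Examples block already labels completeness as obvious. The one sub-point to confirm is that completeness really holds for the lex-defined families $\Alex(S)$, $\Aolex(S)$ and $\AOEzeta(S)$, but this is immediate once one observes that the sequence $(\omega_1(A), \omega_2(A),\ldots)$ depends only on the isomorphism type of the abelian group~$A$, so the defining inequalities $A \geq_\lex A'$ together with the order and exponent constraints transfer across any isomorphism inside~$S$. Once Theorem~\ref{ThmReplacementExamples} is in hand the corollary is a one-line deduction.
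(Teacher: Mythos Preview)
Your proposal is correct and is exactly the intended argument: the paper marks the corollary with \qed\ and gives no further proof, treating it as immediate from Theorem~\ref{ThmReplacementExamples} (with $D=S$) together with Corollary~\ref{CorZJ}, using the already-noted completeness of each family to supply full invariance. Your spelling-out of why completeness yields full invariance (that $\Aut(P)$ permutes $\A|_P$) is precisely the one-line observation the paper leaves implicit.
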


Theorem~\ref{ThmReplacementExamples}
is a wrapper around
Glauberman's replacement theorem, extended 
to cover 
the last three cases (Theorem~\ref{ThmReplacement}).  
Corollary~\ref{CorGlaubermanExamples}
contains the classical
forms of the ZJ Theorem.  Namely: $\ZJo(S)$
and $\OmegaZJe(S)$ have the Glauberman
property.   This follows from
$$
\Io(S)=\ZJo(S)
\quad
\hbox{and}
\quad
\Ie(S)=\OmegaZJe(S).
$$
The first equality uses $\IA=Z(\JA)=C_S(\JA)$
when 
every member of
$\A$ is maximal in $\Ab(S)$ under
inclusion (Lemma~\ref{LemIisZJisCJ}). The second
is similar.  
From our perspective, the Glauberman property for $\ZJo(S)$
and $\OmegaZJe(S)$ derives from their coincidence with $I_o(S)$
and $I_e(S)$, and has nothing to do
with % the Thompson subgroups 
$\Jo(S)$ and
$\Je(S)$.
$\Ar(S)$ gives nothing new: 
Theorem~\ref{ThmOmegaZJrisOmegaZJe} shows
$$\Ir(S)=\Ie(S)=
\OmegaZJr(S)=\OmegaZJe(S).$$

We chose the new families
$\Alex(S)$ and $\Aolex(S)$ to
be ``small'', so that $J_{\cdots}(S)$ would also
be ``small'' and $I_{\cdots}(S)$ 
would be ``large''.
In particular,
\begin{align*}
\Ilex(S)
    =\ZJlex(S)=C_S(\Jlex(S))&{}\geq \ZJr(S)
    \\
\Iolex(S)
    =\ZJolex(S)=C_S(\Jolex(S))&{}\geq \ZJo(S).
\end{align*}
The equalities use
Lemma~\ref{LemIisZJisCJ}.
The containments follow from $\Alex(S)\sset\Ar(S)$
and $\Aolex(S)\sset\Ao(S)$, and
can easily be
strict (Examples \ref{EgBigZJlex} and~\ref{EgBigZJolex}). 
The containment $\ZJlex(S)\geq \ZJr(S)$
is interesting because 
it is not known whether $\ZJr(S)$ has the Glauberman property.
Any two
members of $\Alex(S)$ resp.\ $\Aolex(S)$
are isomorphic
to each other.  

There is no reason to expect $\AOEzeta(S)$
to be interesting; we include it mainly to give a sense
of what is possible using replacement.

Corollary~\ref{CorGlaubermanExamples} uses the $D=S$ case
of Theorem~\ref{ThmReplacementExamples}.  Since one can
take $D$ to be any normal subgroup there,
this
suggests trying to apply Theorem~\ref{ThmAxiomaticZJ}
to some
suitable
$\A\sset\Ab(D)$ with $D\normal S$.
In this way we can recover some recent results of 
K{\i}zmaz.
Recall that
$D\normal S$ is called \emph{strongly closed}
(in~$S$, with respect to $G\geq S$), if 
the only elements of $S$ which are $G$-conjugate
into~$D$ are the elements of~$D$.  
If this holds and 
$\A\sset\Ab(D)$ is complete (in~$D$),
then it is not hard to see that  
$\A$ satisfies invariance (in~$S$, for~$G$).
In fact strong closure is stronger than necessary for
this argument.
%: $D$ need not contain $x^G\cap S$
%for all $x\in D$, merely all the $G$-conjugates of
%$A$ in~$S$, for each $A\in\A$.  I have not tried
%to find the optimal formulation.

Therefore Theorem~\ref{ThmAxiomaticZJ} implies the following
``axiomatic'' version of \cite[Theorem~B]{Kizmaz}.
Corollary~\ref{CorKizmazExamples} 
below takes $D=\Omega_i(S)$,
and is our
analogue of \cite[Remark~1.4]{Kizmaz}.

\begin{theorem}
    \label{ThmAxiomaticKizmaz}
    Suppose $p$ is a prime, $S$ is a $p$-group,
    $G$ is a group satisfying
    \eqref{ItemSaSylow}--\eqref{ItemZJStability}
    of Theorem~\ref{ThmAxiomaticZJ}, and $D\normal S$ is
    strongly closed in~$S$ with respect to~$G$.
    Then $\IA\normal G$ for 
    any $\A\sset\Ab(D)$ which is complete
    (in~$D$) and satisfies replacement (in~$S$).
    \qed
\end{theorem}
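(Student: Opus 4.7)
The plan is to invoke Theorem~\ref{ThmAxiomaticZJ} directly. Hypotheses \eqref{ItemSaSylow}--\eqref{ItemZJStability} on $G$ and $S$ hold by assumption, and $\A$ has the \textbf{replacement} property by assumption. The only thing left to verify is \textbf{invariance} (in~$S$, for~$G$): that for every $P\normal S$ the subgroup $I_{\!\A|P}$ is stabilized by $N_G(P)$.

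Fix $P\normal S$ and $g\in N_G(P)$. I would show that conjugation by $g$ permutes the set $\A|_P$; since $I_{\!\A|P}$ is by definition the intersection of the members of $\A|_P$, this gives $g\,I_{\!\A|P}\,g^{-1}=I_{\!\A|P}$ immediately. So let $A\in\A|_P$. Then $A$ is an abelian subgroup of $D$ contained in $P$, and $gAg^{-1}$ is an abelian subgroup of $P\subgp S$ isomorphic to $A$. For any $a\in A\subgp D$, the element $gag^{-1}$ lies in $S$ and is $G$-conjugate to $a\in D$, so strong closure forces $gag^{-1}\in D$. Hence $gAg^{-1}\subgp D$, and since $\A$ is complete in $D$ and $gAg^{-1}\iso A\in\A$, completeness gives $gAg^{-1}\in\A$. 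Together with $gAg^{-1}\subgp P$, this places $gAg^{-1}$ in $\A|_P$; applying the same argument to $g^{-1}$ shows that $A\mapsto gAg^{-1}$ is a bijection of $\A|_P$, establishing invariance.

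There is really no serious obstacle: the only conceptual point is the interlocking of the two hypotheses, with strong closure guaranteeing that the $g$-conjugate of $A$ lands inside $D$, and completeness of $\A$ in $D$ then dragging it back into $\A$. With invariance established, Theorem~\ref{ThmAxiomaticZJ} at once yields $\IA\normal G$. As the paragraph preceding the statement notes, this argument uses strong closure only on elements of members of~$\A$, so a formally weaker hypothesis would already suffice.
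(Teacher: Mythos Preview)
Your proof is correct and is exactly the argument the paper has in mind: the paragraph preceding the theorem says that strong closure together with completeness in~$D$ yields invariance (in~$S$, for~$G$), after which Theorem~\ref{ThmAxiomaticZJ} applies; you have simply written out the details of that step. Your concluding remark about strong closure being stronger than necessary also matches the paper's observation.
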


\begin{corollary}
    \label{CorKizmazExamples}
    Suppose $p$ is an odd prime, $S$ is a $p$-group,
    $i\geq1$,
    and 
    $\Omega_i(S)$ has exponent${}\leq p^i$
    (for example, suppose $S$ has 
    class${}<p$).  Then  all of
    $\ZJo\Omega_i(S)$, 
    $\OmegaZJe\Omega_i(S)$, 
    $\ZJlex\Omega_i(S)$,
    $\ZJolex\Omega_i(S)$ and 
    $\IOEzeta(\Omega_i(S))$ (for any $O,E,\zeta$)
    have the Glauberman property for~$S$.
    \qed
\end{corollary}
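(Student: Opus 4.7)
The plan is to apply Theorem~\ref{ThmAxiomaticKizmaz} with $D=\Omega_i(S)$, letting $\A$ run over the six families of Theorem~\ref{ThmReplacementExamples} applied to~$D$. Since $\Omega_i(S)$ is characteristic in~$S$, we have $D\normal S$, so Theorem~\ref{ThmReplacementExamples} directly gives replacement in~$S$ for each such $\A\sset\Ab(D)$. Completeness of each family in~$D$ is immediate: membership is determined purely by intrinsic isomorphism invariants (order, rank, exponent, elementary-abelianness, and the sequence $(\omega_1,\omega_2,\dots)$).

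The only axiomatic ingredient requiring real work is strong closure of $D=\Omega_i(S)$ in~$S$ with respect to~$G$, and I expect this to be the main obstacle. The containment $\Omega_i(S)\supseteq\set{x\in S}{x^{p^i}=1}$ always holds by definition, and the exponent hypothesis supplies the reverse containment, giving $\Omega_i(S)=\set{x\in S}{x^{p^i}=1}$ as a set. Since $G$-conjugation preserves element orders, any $s\in S$ that is $G$-conjugate into~$\Omega_i(S)$ satisfies $s^{p^i}=1$ and therefore lies in~$\Omega_i(S)$, yielding strong closure. The parenthetical ``class${}<p$'' case reduces to this via the classical theorem that such an~$S$ is a regular $p$-group and that in a regular $p$-group $\Omega_i$ has exponent at most~$p^i$. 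Without the exponent hypothesis $\Omega_i(S)$ is merely generated by elements of order${}\leq p^i$ and can contain elements of larger order, at which point $G$-conjugation need not preserve~$D$ and the whole strategy collapses.

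Theorem~\ref{ThmAxiomaticKizmaz} now gives $\IA\normal G$ for each choice of~$\A$. To upgrade to characteristic in~$G$, I would observe that completeness of $\A$ makes the family $\Aut(\Omega_i(S))$-invariant; since $\Omega_i(S)$ is characteristic in~$S$, every $\sigma\in\Aut(S)$ restricts to an element of $\Aut(\Omega_i(S))$, hence permutes~$\A$ and fixes $\IA$ setwise, so $\IA$ is characteristic in~$S$. The final clause of Theorem~\ref{ThmAxiomaticZJ} (which Theorem~\ref{ThmAxiomaticKizmaz} inherits) then promotes this to characteristic in~$G$, i.e., to the Glauberman property. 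Finally, the identifications $\Io(D)=\ZJo(D)$, $\Ie(D)=\OmegaZJe(D)$, $\Ilex(D)=\ZJlex(D)$ and $\Iolex(D)=\ZJolex(D)$ with $D=\Omega_i(S)$ are precisely those established in the discussion preceding Corollary~\ref{CorKizmazExamples}, applied with $\Omega_i(S)$ in place of~$S$; no identification is required for $\IOEzeta(\Omega_i(S))$, which appears in the statement under its $I$-name.
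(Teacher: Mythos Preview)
Your proposal is correct and is exactly the argument the paper has in mind: the corollary carries a bare \qed because it is intended to follow immediately from Theorem~\ref{ThmAxiomaticKizmaz} with $D=\Omega_i(S)$, using Theorem~\ref{ThmReplacementExamples} for replacement, completeness by inspection, the exponent hypothesis to turn $\Omega_i(S)$ into the set of elements of order${}\leq p^i$ (hence strongly closed), and the identifications $\Io=\ZJo$, $\Ie=\OmegaZJe$, $\Ilex=\ZJlex$, $\Iolex=\ZJolex$ from the introduction. Your extra care in promoting $\IA\normal G$ to $\IA$ characteristic in~$G$ (via $\IA$ characteristic in~$S$ and the last clause of Theorem~\ref{ThmAxiomaticZJ}) is a detail the paper leaves implicit but which is indeed needed for the Glauberman property as stated.
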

    
I am grateful to Bernd Stellmacher 
and M. Yasir K{\i}zmaz for
helpful correspondence.

\section{Background and Notation}
\label{SecPreliminaries}

\noindent
We mostly follow the conventions of \cite{Gorenstein}.
Let $G$ be a group. 
If $w,x\in G$, then 
 $w^x$ means $x^{-1}wx$ and  $[w,x]$ means 
$w^{-1}x^{-1}wx$.
Brackets nest to the left, so $[x_1,\dots,x_n]$
means $[[x_1,\dots,x_{n-1}],x_n]$ when $n>2$.

Suppose $p$ is a prime.  If $S$ is a $p$-group,
then $\Omega_i(S)$ means the subgroup generated
by all elements of order${}\leq p^i$.  When $i=1$
we often write just $\Omega(S)$.
The rank of an abelian group means the size of the
smallest set of generators.
The rank of a nonabelian group means the maximum of
the ranks of its abelian subgroups.  We will
only use this notion for $p$-groups.
We sometimes suppress parentheses, eg
writing
$\OmegaZJe(S)$ for $\Omega(Z(\Je(S)))$.

The largest
normal $p$-subgroup of  $G$ is denoted $O_p(G)$.
Now suppose $G$ acts on a $p$-group~$P$.
We define $O_p(G\actson P)\normal G$ as the 
pre\-image of $O_p(G/C_G(P))$
under the natural map $G\to G/C_G(P)$.
This notation is nonstandard but natural; it
can be pronounced ``$O_p$ of $G$'s action on~$P$''.
We say that $x\in G$ acts quadratically
if $[P,x,x]=1$.  
The action of~$G$ on~$P$ is called $p$-stable if every element 
of~$G$ that acts quadratically lies in $O_p(G\actson P)$.
There is a simple ``global'' condition that guarantees
this: that 
no subquotient of~$G$ is isomorphic
to $\SL_2(p)$.  A proof of this 
can be extracted from that of
\cite[Lemma~6.3]{Glauberman}.
One main case of interest is when  $p$ is odd
and  $G$ has abelian
or dihedral Sylow $2$-subgroups. Having
quaternionic Sylow $2$-subgroups, $\SL_2(p)$
cannot arise as a subquotient.

We use the following elementary lemma several times.

\begin{lemma}
    \label{LemIisZJisCJ}
    Suppose $S$ is a $p$-group, $\A\sset\Ab(S)$,
    and every member of $\A$ is maximal in $\Ab(S)$
    under inclusion.  Then $\IA=Z(\JA)=C_S(\JA)$.
\end{lemma}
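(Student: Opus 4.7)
The plan is to reduce the lemma to the classical observation that a maximal abelian subgroup of any group is self-centralizing; from there the three equalities fall out by a short containment argument. I may tacitly assume $\A$ is nonempty, since otherwise $J_{\!\A}=1$ and $C_S(J_{\!\A})=S$ would not agree with $I_{\!\A}=1$.

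First I would establish that $C_S(A)=A$ for every $A\in\A$. If $x\in C_S(A)$, then $\gend{A,x}$ is again an abelian subgroup of $S$, and the maximality of $A$ in $\Ab(S)$ forces $\gend{A,x}=A$, so $x\in A$; the reverse containment $A\sset C_S(A)$ is immediate because $A$ is abelian. (Note that this step uses neither that $S$ is a $p$-group nor anything about $\A$ beyond the maximality hypothesis.)

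Next I would use this to identify $I_{\!\A}$ with $C_S(J_{\!\A})$ by the chain
\[
I_{\!\A}=\bigcap_{A\in\A}A=\bigcap_{A\in\A}C_S(A)=C_S\Bigl(\bigcup_{A\in\A}A\Bigr)=C_S(J_{\!\A}),
\]
using only the preceding paragraph and the elementary fact that the centralizer of a union equals the intersection of the centralizers. Finally, since every $A\in\A$ satisfies $I_{\!\A}\sset A\sset J_{\!\A}$, I get $I_{\!\A}\sset J_{\!\A}$, and combining this with $I_{\!\A}=C_S(J_{\!\A})$ yields
\[
I_{\!\A}\sset J_{\!\A}\cap C_S(J_{\!\A})=Z(J_{\!\A})\sset C_S(J_{\!\A})=I_{\!\A},
\]
forcing equality throughout.

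There is no genuine obstacle here; the only piece of content is the self-centralizing property of maximal abelian subgroups, after which everything is bookkeeping.
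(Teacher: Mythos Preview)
Your proof is correct and follows essentially the same approach as the paper: both arguments hinge on the fact that a maximal abelian subgroup is self-centralizing, and then sandwich $Z(\JA)$ between $\IA$ and $C_S(\JA)$. The paper's version is simply more compressed, running the containment $C_S(\JA)\subgp\IA$ directly rather than first isolating $C_S(A)=A$; your explicit remark about the empty case is a reasonable addition.
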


\begin{proof}
    The inclusions $\IA\subgp Z(\JA)\subgp C_S(\JA)$
    are obvious.  Now suppose
    $x\in C_S(\JA)$. For any $A\in\A$, $\gend{A,x}$
    is abelian, so the maximality of~$A$ 
    forces $x\in A$.
    Letting $A$ vary over $\A$ gives $x\in\IA$.
\end{proof}

\section{Replacement}
\label{SecReplacement}

\begin{theorem}[Replacement]
    \label{ThmReplacement}
    Suppose $p$ is a prime, 
    $S$ is a $p$-group and $B\normal S$.  
    If $p=2$ then assume $B$ is abelian.
    Suppose $A\subgp S$ is abelian and contains $[B,B]$.  

    Then either $B$ normalizes~$A$, or there exists
        $b\in N_B(N_S(A))-N_B(A)$.
        For any such~$b$,
    $A^*:=(A\cap A^b)[A,b]\subgp AA^b$
    enjoys the properties
    \begin{enumerate}
        \item
            \label{ItemReplacementSameOrder}
            $|A^*|=|A|$.
        \item
            \label{ItemReplacementPersistence}
            Like $A$, $A^*$ is abelian and contains $[B,B]$.
        \item
            \label{ItemReplacementIntersectionGrows}
            $A^*\cap B$ strictly contains $A\cap B$
            and is a proper subgroup of~$B$.
        \item
            \label{ItemReplacementNormalization}
            $A^*$ and $A$ normalize each other.
        \item
            \label{ItemReplacementNormalizersGrow}
             $N_S(A^*)$ contains $b$ and strictly
             contains $N_S(A)$.
        %\item
        %    \label{ItemReplacementExponent}
        %    \leavevmode
        %    \hbox{If $p>2$ then }%
        %    $\exponent(A^*)\leq\exponent(A)$.
        %    In particular, 
        %    $A^*$ is elementary 
        %    abelian if $A$ is.
        \item
            \label{ItemReplacementLex}
            \leavevmode
            %\hbox{If $p>2$ then }%
            If $p>2$, then
            $\omega_i(A^*)\geq\omega_i(A)$ 
            for all $i\geq1$.  
            In particular, 
            \begin{align*}
                \exponent(A^*)&{}\leq\exponent(A)
                \\
                \rank(A^*)&{}\geq\rank(A)
                \\
                A^*&{}\geq_\lex A.
                \qedhere
            \end{align*}
    \end{enumerate}
\end{theorem}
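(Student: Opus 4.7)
The plan is: first prove the existence of $b$; then set up the structural analysis of $AA^b$; and finally derive (i)--(vi). I expect the abelianness claim in (ii) for odd $p$ with $B$ nonabelian to be the main obstacle.

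\textbf{Existence of $b$ and structural setup.} If $B\not\leq H:=N_S(A)$, then $H$ is a proper subgroup of the $p$-group $BH$, so by normalizer-growth $N_{BH}(H)\supsetneq H$. Writing any element of the difference as $bh$ with $b\in B$ and $h\in H$ gives $b\in N_B(N_S(A))\setminus N_B(A)$. Fix such $b$. Since $b$ normalizes $H$, $A^b\leq H$ normalizes $A$; and the symmetric computation $N_S(A^b)=H^b=H\geq A$ shows $A$ normalizes $A^b$. Hence $A$ and $A^b$ are each normal in $AA^b$, $A\cap A^b$ is central there, and $AA^b/(A\cap A^b)\iso\bar A\times\overline{A^b}$ is a direct product. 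In particular $AA^b$ has class ${}\leq 2$, and $A^*\leq AA^b$ because $[a,b]=a^{-1}a^b$.

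\textbf{Easier properties.} For (i), the image $\overline{A^*}$ is the ``graph'' $\{(\bar a^{-1},\phi(\bar a)):\bar a\in\bar A\}$ of the conjugation isomorphism $\phi\colon\bar A\to\overline{A^b}$, a subgroup isomorphic to $\bar A$; so $|A^*|=|A\cap A^b|\cdot|\bar A|=|A|$. Because $[B,B]$ is $b$-invariant and $\leq A$, it lies in $A\cap A^b\leq A^*$, giving half of~(ii); similarly $A\cap B\normal B$ is $b$-invariant, so $A\cap B\leq A\cap A^b\leq A^*\cap B$, with strict containment in~(iii) following from $[A,b]\leq B$ and $[A,b]\not\leq A$ (else $b\in N_S(A)$), and the upper properness from $A^*\leq N_S(A)$. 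For~(iv), $A^*\leq N_S(A)$ is half; the other half is that $A\leq N_S(A)\cap N_S(A^b)=H$ preserves $\overline{A^*}$ because $\bar A$ acts trivially on $\bar A\times\overline{A^b}$. Property~(v) combines $N_S(A)=N_S(A^b)$ with an explicit verification that $b$ itself normalizes~$A^*$.

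\textbf{Main obstacle: abelianness of $A^*$, and (vi).} The setup gives only $[A^*,A^*]\leq A\cap A^b$, i.e., class ${}\leq 2$. Since $A\cap A^b$ is central in $A^*$, $[A^*,A^*]=[[A,b],[A,b]]\leq[B,B]\leq A\cap A^b$. Bilinearity of the commutator on class-$2$ $AA^b$ reduces the vanishing of $[[a_1,b],[a_2,b]]$ to the symmetry identity $[a_1,a_2^b]=[a_2,a_1^b]$ in $A\cap A^b$. For $p=2$ this is immediate from $B$ abelian. For odd $p$, the leverage is $[[A,b],b]\leq[B,B]\leq A\cap A^b$ (a ``quadratic'' action of $b$ modulo center); a Hall-Witt computation together with oddness of $p$ (used to extract square roots in the $p$-group $A\cap A^b$) yields the needed symmetry. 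Property~(vi) is obtained by tracking $\Omega_i(A)\to\Omega_i(A^*)$ under the graph description of $\overline{A^*}$, using that conjugation by $b$ preserves element-orders. Abelianness for odd $p$ is the most delicate step.
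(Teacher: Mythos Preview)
Your architecture matches the paper's, and several pieces are fine (your existence argument for $b$ via normalizer growth in $BH$ is even cleaner than the paper's; the graph description of $\overline{A^*}$ is exactly the paper's complement argument in different clothing, and it handles (i)--(iv) correctly).  But two steps are genuinely under-supported.

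\textbf{Gap in (v).}  From $N_S(A)=N_S(A^b)$ you only get that $N:=N_S(A)$ normalizes $A$, $A^b$, $A\cap A^b$ and $AA^b$.  It does \emph{not} follow formally that $N$ normalizes $[A,b]$, since $[A,b]^n=[A,b^n]$ and $b^n\neq b$ in general.  The paper closes this by first proving the alternate description $A^*=(A\cap A^b)\,(AA^b\cap B)$: once you know $\overline{[A,b]}=\overline{AA^b\cap B}$ (both are complements to $\bar A$, and the second meets each $\bar A$-coset at most once because $A\cap B\leq A\cap A^b$), invariance of $A^*$ under $N$ is immediate since $N$ normalizes each of $A,A^b,AA^b,B$.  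Without that description you would have to argue, for $n\in N$, that $[A,b^n]\subseteq(A\cap A^b)[A,b]$, which is doable but is real work you have not indicated.

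\textbf{Gap in (vi).}  The graph isomorphism $\bar A\cong\overline{A^*}$ says nothing about $\omega_i$: two abelian groups can sit over the same central subgroup with isomorphic quotients yet have different $\Omega_i$ (e.g.\ $\Z/p^2$ versus $\Z/p\times\Z/p$ over $\Z/p$).  ``Conjugation by $b$ preserves orders'' only gives $\Omega_i(A)^b=\Omega_i(A^b)$.  The missing ingredient is that in the class-${\leq}2$ group $AA^b$ with $p$ odd one has $(xy)^{p^i}=x^{p^i}y^{p^i}[y,x]^{\binom{p^i}{2}}$ and $[x,y]^{p^i}=[x^{p^i},y]$, so $\Omega_i(AA^b)$ has exponent $\leq p^i$.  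The paper then applies the whole construction to $A_i:=\Omega_i(A)$, obtaining $A_i^*:=(A_i\cap A_i^b)[A_i,b]\leq\Omega_i(AA^b)$ of exponent $\leq p^i$, hence $A_i^*\leq\Omega_i(A^*)$, and the same counting as in (i) gives $|A_i^*|\geq|A_i|$.  Your sketch needs exactly this power-regularity step.

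On (ii) for odd $p$: your reduction to the symmetry $[a_1,a_2^b]=[a_2,a_1^b]$ (equivalently $[a_1,[a_2,b]]=[a_2,[a_1,b]]$) is correct, and your leverage $[[A,b],b]\leq[B,B]\leq A\cap A^b$ is the right hypothesis.  The paper packages the computation as a separate lemma: under $[b,A,A,A]=1$ (which holds here since $[b,A,A]\leq[AA^b,A]\leq Z(AA^b)$) one shows directly that each $[[a_1,b],[a_2,b]]$ squares to~$1$, hence vanishes for odd~$p$.  Your ``Hall--Witt plus square roots'' gesture points in the right direction but is not yet a proof.
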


\noindent
Glauberman's replacement theorem
\cite[Theorem~4.1]{Glauberman}
adds the hypothesis that $\class(B)\leq2$, and
establishes \eqref{ItemReplacementSameOrder}--\eqref{ItemReplacementIntersectionGrows}.
This is enough to prove that
$\ZJo(S)$ has the Glauberman property.
Isaacs simplified the proof by
replacing some of the counting arguments with
structural ones \cite{Isaacs}.  He took $B$
abelian, as in Thompson's replacement theorem,
but with some work his arguments
can be adapted.  
Course notes
of Gagola \cite{Gagola} include a proof 
along these lines, 
citing
long-ago
unpublished work by (separately)
Isaacs, Passman and Goldschmidt.  
This includes the exponent inequality in
\eqref{ItemReplacementLex}, and removed Glauberman's
hypothesis on $\class(B)$.
K{\i}zmaz \cite{Kizmaz} independently adapted Isaacs' arguments from
\cite{Isaacs}
and proved his own generalization of Glauberman's replacement
theorem, namely \cite[Theorem~A]{Kizmaz}.
This includes
\eqref{ItemReplacementLex}, although he only stated the $i=1$ case
and the rank inequality.
He also clarified the overall argument by isolating the
commutator calculations in \cite[Lemma~2.1]{Kizmaz}, from which
our Lemma~\ref{LemReplacement} grew.

To my knowledge, \eqref{ItemReplacementNormalizersGrow}
is new.  It is
curious because it says that
$N_S(A)$
is a measure of how well-positioned
$A$ is with respect to~$B$, yet $N_S(A)$ is independent
of~$B$.  An interesting
consequence is that if 
$A\in\Ab(G)$ has largest possible normalizer, 
among all abelian subgroups of~$S$ with order~$|A|$,
then $A$ automatically centralizes the
ZJ-type
group $D^*(S)$ introduced by
Glauberman and Solomon \cite{GS}.  We postpone the details
until Theorem~\ref{ThmCentralizesDstar},
to avoid breaking the flow of
ideas.  

\begin{lemma}
    \label{LemReplacement}
    Suppose  a group $A$
    acts on a group $B$ 
     and centralizes
     $[B,B]$.  Then 
     the commutator subgroup of $[B,A]$
     is central in~$B$.  

     Furthermore, 
     if $A$ is abelian and $b\in B$ satisfies
     $[b,A,A,A]=1$, then the commutator subgroup of
     $[b,A]$  is an elementary 
     abelian $2$-group.
\end{lemma}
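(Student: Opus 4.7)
My plan is two applications of the Hall--Witt identity for the first part, and an explicit commutator computation---bootstrapped by the first part---for the second.

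For the first part, I work in $G = B \rtimes A$ and apply Hall--Witt first to a triple $(c, a, b)$ with $c \in [B, B]$, $a \in A$, $b \in B$. The outer two triple-commutators $[[c, a^{-1}], b]^a$ and $[[b, c^{-1}], a]^c$ vanish because $A$ centralizes $[B, B]$---the first directly, the third because $[b, c^{-1}] \in [B, [B, B]] \subseteq [B, B]$. The middle then yields $[[a, b^{-1}], c] = 1$; since the elements $[a, b^{-1}]$ generate $[B, A]$, this establishes the auxiliary fact $[[B, A], [B, B]] = 1$. A second Hall--Witt applied to $(x, y, b)$ with $x, y \in [B, A]$ and $b \in B$ now has both outer terms vanishing by the auxiliary fact (each of $[y, b^{-1}]$ and $[b, x^{-1}]$ lies in $[B, B]$, which is centralized by $[B, A]$), leaving $[[x, y^{-1}], b] = 1$ for all $x, y \in [B, A]$. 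Substituting $y \mapsto y^{-1}$ and noting that the commutators $[x,y]$ generate the commutator subgroup, I conclude $[[B, A], [B, A]] \leq Z(B)$.

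For the second part, set $u = [b, a_1]$ and $v = [b, a_2]$. Using $A$ abelian gives $[b, a_1]^{a_2} = [b^{a_2}, a_1]$; expanding $b^{a_2} = b \cdot v$ and comparing with the definitional $u^{a_2} = u \cdot [u, a_2]$ produces the key formula
\[
[u, v] = P \cdot Q^{-1}, \qquad P = [b, a_1, a_2],\ Q = [b, a_2, a_1],
\]
with $P, Q \in [b, A, A]$, which is pointwise $A$-fixed by the hypothesis $[b, A, A, A] = 1$. Applying the first part of this lemma to $B_0 = [b, A]$ carrying its induced $A$-action (the key formula shows $[B_0, B_0] \subseteq [b, A, A]$, which is $A$-central, so the hypothesis of part~1 is satisfied), I obtain $[b, A, A] \leq Z(B_0)$. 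Hence $P$ and $Q$ commute, and commutators on the class-$2$ group $[b, A]$ are bi-multiplicative. The symmetry $a_1 \leftrightarrow a_2$ exchanges $P \leftrightarrow Q$ and sends $[u, v] \mapsto [u, v]^{-1}$; combined with bi-multiplicativity, this forces $[u, v]^2 = 1$. Since distinct such commutators lie in the abelian $[b, A, A]$ and each has order dividing~$2$, the commutator subgroup of $[b, A]$ is elementary abelian of exponent~$2$.

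The main obstacle I foresee is the concluding identity $(P Q^{-1})^2 = 1$: commutativity of $P, Q$ alone gives only $(P Q^{-1})^2 = P^2 Q^{-2}$, so one additionally needs $P^2 = Q^2$. Producing this equality requires a Jacobi-type identity exploiting both $A$ abelian and $[b, A, A, A] = 1$, which I would derive by carefully tracking the collection formula along the lines of \cite[Lemma~2.1]{Kizmaz}, cited in the text as the source of these manipulations.
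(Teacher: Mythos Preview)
Your argument for the first assertion is correct and essentially matches the paper's: the paper observes that $[B,A]$ centralizes $[B,B]$ (the centralizer of the normal subgroup $[B,B]$ is itself normal and contains~$A$, hence contains $[B,A]$), and then applies the Three Subgroups Lemma to $[B,[B,A],[B,A]]=1$ and $[[B,A],B,[B,A]]=1$. Your two Hall--Witt applications do the same work.

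For the second assertion there is a genuine gap, precisely the one you flag at the end. The relations you extract---$[u,v]=PQ^{-1}$, its swap $[v,u]=QP^{-1}$, and commutativity of $P,Q$---do not force $[u,v]^2=1$: in the free nilpotent group of class~$2$ on $u,v$, take $P=[u,v]$ and $Q=1$; all of your relations hold and $[u,v]$ has infinite order. The paper closes the gap by bringing $b$ back into the picture. First, conjugating $[P,b]\in[B,B]$ by any $a\in A$ gives $[P,b]=[P,b\cdot[b,a]]$, hence $[P,[b,a]]=1$; thus $P$ (and likewise $Q$) is central in $[b,A]$. Then, conjugating $[u,b]\in[B,B]$ by $a_2$ gives $[u,b]=[uP,bv]$; expanding and discarding conjugations by elements of $[B,A]$ (which centralize $[B,B]$) yields the key identity $(\star)$: $[u,v]\cdot[P,b]=1$. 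Now your relation $P=[u,v]Q$, together with $[u,v]\in Z(B)$ from part~1, gives $[P,b]=[Q,b]$; combining $(\star)$ with its $a_1\leftrightarrow a_2$ swap then produces $[u,v]=[P,b]^{-1}=[Q,b]^{-1}=[v,u]$, so $[u,v]^2=1$. The missing idea is thus to pass through commutators with $b$; no amount of manipulation inside $\langle u,v,P,Q\rangle$ alone will succeed.

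A smaller point: your appeal to part~1 with $B_0=[b,A]$ would yield only $\bigl[[B_0,A],[B_0,A]\bigr]\le Z(B_0)$, not the claimed $[b,A,A]\le Z(B_0)$. The stronger statement is true, but its proof is the conjugation trick in the previous paragraph, not an application of part~1.
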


\begin{proof}
    (We do not use our blanket hypothesis that
    groups are finite, so $A$ and $B$ could be infinite.)
    Because $A$ centralizes $[B,B]$,
    so does $[B,A]$.  Two special cases of this are
    $[B,[B,A],[B,A]]=1=[[B,A],B,[B,A]]$.  Now the three
    subgroups lemma gives $[[B,A],[B,A],B]=1$, which
    is the first part of the lemma.

    The commutator subgroup of $[b,A]$ is abelian because it
    is central. It is generated by 
    the $[[b,x],[b,y]]$ with $x,y$ varying over~$A$.
    So it suffices to show that each has order${}\leq2$.
    We fix $x,y$ and abbreviate:
    \begin{gather*}
    b_x=[b,x]
    \quad
    b_y=[b,y]
    \\
    b_{x x}=[b,x,x]
    \quad
    b_{x y}=[b,x,y]
    \quad
    b_{y x}=[b,y,x]
    \quad 
    b_{y y}=[b,y,y].
    \end{gather*}
    By hypothesis, $x$ and $y$ centralize the last four of these.

    We will use the following identities, for any $u,v,w$ in any group:
    $$
    u^v=u[u,v]
    \qquad
    [u,vw]=[u,w][u,v]^w
    \qquad
    [uv,w]=[u,w]^v[v,w].
    $$
    In particular, $b^y=bb_y$ and $b_x^y=b_xb_{xy}$.
    Since $A$ centralizes
    $[B,B]$,
    $$
    [b_{xy},b]=[b_{xy}^y,b^y]
    =[b_{xy},bb_y]
    =[b_{xy},b_y][b_{xy},b]^{b_y\leftarrow \textrm{discard}}.
    $$
    We may discard the indicated conjugation because $[B,A]$
    centralizes $[B,B]$. 
    Canceling the $[b_{xy},b]$ terms leaves
    $1=[b_{xy},b_y]$.  Similarly,
    \begin{align*}
        [b_x,b]
        &{}=[b_x^y,b^y]=[b_x b_{xy},b b_y]
        =[b_x b_{xy},b_y]
        [b_x b_{xy}, b]^{b_y\leftarrow \textrm{discard}}
        \\
        &{}=[b_x,b_y]^{b_{xy}\leftarrow \textrm{discard}}  
        \,
        [b_{xy},b_y]
        \cdot
        [b_x,b]^{b_{xy}\leftarrow \textrm{discard}}
        \,
        [b_{xy},b].
    \end{align*}
    We discard conjugations as before, and
    we just saw that
    the second commutator is trivial. The first commutator
    is central,
    so we may cancel the $[b_x,b]$ terms.
    This leaves
    $(\star)$
    $1=[b_x,b_y][b_{xy},b]$.

    Next, we have
    $
    [b,xy]=[b,y][b,x]^y=b_y b_x^y=b_y b_x b_{xy}.
    $
    Exchanging $x$ and~$y$ doesn't change the left side, 
    so $b_y b_x b_{xy}
    =b_x b_y b_{yx}$.  Moving two terms to the right
    yields $b_{xy}=[b_x,b_y]b_{yx}$.  Bracketing 
    by $b$, and using the centrality of $[b_x,b_y]$, gives
    %$$
    %[b_{xy},b]=[[b_x,b_y],b]^{b_{yx}\leftarrow \textrm{discard}}
    %\,[b_{yx},b]
    %$$
    %The nested commutator on the right is trivial by centrality.  
    %We have proven
    $[b_{xy},b]=[b_{yx},b]$.  By $(\star)$ and its analogue
    with $x$ and~$y$ swapped, this implies
    $[b_x,b_y]=[b_y,b_x]$. That is, $[b_x,b_y]^2=1$.
\end{proof}

%\begin{example}
%    It is possible for $[b,A]$ to be nonabelian.  
%    Start with the free group with generators
%    $x,y,b$,   define
%    $B$ as the subgroup normally generated by~$b$, 
%    and set $A=\gend{x,y}$.  
%    We impose the relations
%    that $[B,B]$ is central and that $[x,y]=[b,A,A,A]=1$.
%    Continuing the proof  shows
%    that $\bbar_x,\bbar_y,\bbar_{x x},\bbar_{xy},\bbar_{yy}$
%    commute except that $[\bbar_x,\bbar_y]$ has
%    order~$2$.
%\end{example}

%\begin{example}
%    One cannot extend \eqref{ItemReplacementExponent} 
%    or \eqref{ItemReplacementLex} to
%    the case $p=2$.
%    Take $S=D_{16}$, $B\subgp S$ to be cyclic
%    of order~$8$, and $A\subgp S$ to be elementary abelian
%    of order~$4$.  
%\end{example}

\begin{proof}[Proof of Theorem~\ref{ThmReplacement}]
    Suppose $B$ does not normalize $A$.  
    Since $N_B(A)$ is proper in~$B$, it is proper in its
    own normalizer $N_B(N_B(A))$.  
    Because $N_S(A)$ normalizes
    $A$ and $B$, it also normalizes 
    $N_B(A)$ and $N_B(N_B(A))$,
    hence acts on  $N_B(N_B(A))/N_B(A)\neq1$.  
    So some
    $b\in N_B(N_B(A))-N_B(A)$ 
    is $N_S(A)$-invariant modulo~$N_B(A)$, ie
    $[b,N_S(A)]\subgp N_B(A)$.  This inclusion also says
    that
    $b$ normalizes~$N_S(A)$. 
    So $b\in N_B(N_S(A))-N_B(A)$, as claimed.

    Now set $N:=N_S(A)$ and suppose
    $b\in N_B(N)-N_B(A)$ is arbitrary.
    From $[B,B]\subgp A$ we have
    $A\cap B\normal B$, hence $A\cap B\subgp A\cap A^b$.

    From $A\normal N\normal\gend{N,b}$ 
    follows $A^b\normal N$.
    So $A,A^b$ normalize each other.  
    Setting $H=AA^b\normal N$, it follows that
    $[H,H]\subgp A\cap A^b\subgp Z(H)$.  
    In particular, $H$ has class${}\leq2$.
    The identity $(aa'^{-1})(a')^b=a[a',b]$, for  
    any $a,a'\in A$, shows that $H$ is also equal to
    $A[A,b]$.  

    Using bars for images in 
    $H/(A\cap A^b)$, obviously we have 
    $\Abar\cdot\overline{[A,b]}=\Hbar$.  On the other hand,
    $[A,b]$ lies in $H\cap B$, 
    and $\overline{H\cap B}$ meets $\Abar$
    trivially.  (Any element of $H\cap B$, that differs from an element
    of~$A$ by an element of $A\cap A^b$, lies in~$A$, hence
    in $B\cap A\subgp A\cap A^b$.)
    So $\overline{[A,b]}$ meets every coset of $\Abar$
    in $\Hbar$, yet lies in $\overline{H\cap B}$, which
    contains at most one point of each coset.
    Therefore
    $\overline{[A,b]}$ and $\overline{H\cap B}$ coincide
    and form
    a complement to $\Abar$ in~$\Hbar$.
    So
    $$
    A^*=(A\cap A^b)[A,b]=(A\cap A^b)(H\cap B)
    $$
    is a complement to $A$ in $H$, modulo~$A\cap A^b$.  
    Since $A^b$
    is another such complement, we have $A^*/(A\cap A^b)\iso
    A^b/(A\cap A^b)$ and therefore $|A^*|=|A|$,
    proving \eqref{ItemReplacementSameOrder}.

    \eqref{ItemReplacementPersistence} First,
   $[B,B]\subgp A\cap B\subgp A\cap A^b\subgp A^*$.  Now we
   prove $A^*$ abelian.  
    Because $A\cap A^b$ is central in~$H$ it is enough to
    prove
    $[A,b]$ abelian.
    If $p=2$ this follows from the
    hypothesis that $B$ is abelian.  So take $p$ odd.
    We may apply Lemma~\ref{LemReplacement} because
    $[B,B]\subgp A$ and 
    $
    [\,b,A,A,A]\subgp[H,A,A]\subgp[Z(H),A]=1.
    $
    The lemma
    shows that the commutator subgroup of 
    $[A,b]$ is a $2$-group,
    hence trivial.

    \eqref{ItemReplacementIntersectionGrows} 
    We already saw $A\cap B\subgp A\cap A^b\subgp A^*$.
    The strict containment $A\cap B< A^*\cap B$
    comes from
    the fact that $b$ does not normalize~$A$. Namely,
    $A$ omits 
    $b^{-1}ab$ for some $a\in A$, so it also omits
    $a^{-1}b^{-1}ab=[a,b]\in A^*\cap B$.
    And
    $A^*\cap B$ is strictly
    smaller than~$B$ because it lies 
    in $N$ and therefore omits~$b$.  

    \eqref{ItemReplacementNormalization} Both $A,A^*$ 
    contain $A\cap A^b$, hence $[H,H]$,  so 
    are normal in~$H$.

    \eqref{ItemReplacementNormalizersGrow} 
    $N$ 
    normalizes $A^*=(A\cap A^b)(H\cap B)$ because 
    it normalizes all four terms on the right.
    And $b$ normalizes $A^*=(A\cap A^b)[A,b]$ 
    because
    $$
    [A\cap A^b,b]\subgp[A,b]
    \quad\hbox{and}\quad
    [[A,b],b]\subgp[B,B]\subgp A\cap A^b.
    $$
    Because $b\notin N$ it follows that $N_S(A^*)$ is
    strictly larger than~$N$.

%    \eqref{ItemReplacementExponent}
%    $H$ has class${}\leq2$, so the identities
%    $$
%    (xy)^e=x^ey^e[x,y]^{e(e-1)/2}
%    \quad\hbox{and}\quad
%    [x,y]^e=[x,y^e]
%    $$ 
%    hold for all $x,y\in H$.
%    Together with
%    the oddness of~$p$, 
%    they show that $\exponent(\Omega_i(H))\leq p^i$
%    for every~$i$.  In particular, $AA^b$ has
%    exponent bounded by that of~$A$, so its
%    subgroup $A^*$
%    does too.

    \eqref{ItemReplacementLex} 
    We fix~$i$ and write $A_i$ for $\Omega_i(A)$.
    $H$ has class${}\leq2$, so the identities
    $$
    (xy)^e=x^ey^e[x,y]^{e(e-1)/2}
    \quad\hbox{and}\quad
    [x,y]^e=[x,y^e]
    $$ 
    hold for all $x,y\in H$.
    Together with
    the oddness of~$p$, 
    they show that $\Omega_i(H)$ has exponent${}\leq p^i$.
    Therefore its subgroup
    $$
    A_i^*:=(A_i\cap A_i^b)[A_i,b]\subgp A_iA_i^b\subgp \Omega_i(H)
    $$
    does too.   Now we reason as follows:
    $$
    \omega_i(A^*)\geq\omega_i(A_i^*)=|A_i^*|\geq|A_i|=\omega_i(A).
    $$
    The first step uses the obvious containment $A^*\geq A_i^*$,
    and the equalities use
    that $A_i^*$ and $A_i$ have exponent${}\leq p^i$.
    For the remaining inequality, observe that
    quotienting $A_iA_i^*=A_iA_i^b$ by~$A_i$ gives
    $A_i^*/(A_i^*\cap A_i)\iso A_i^b/(A_i^b\cap A_i)$.
    Because $A_i^*$ contains $A_i^b\cap A_i$ this
    implies $|A_i^*|\geq|A_i|$.  
    %(One can go further and prove $|A_i^*|=|A_i|$.)

    The rank and lex inequalities follow immediately.  
    By  $|A^*|=|A|$, the exponent inequality does too.
\end{proof}

\begin{proof}[Proof of Theorem~\ref{ThmReplacementExamples}]
    Write $\A$ for any one of 
    $\Ao(D),\dots,\AOEzeta(D)$.
    Supposing $B\normal S$,
    and that some $U\in\A$ contains $[B,B]$ but not~$B$,
    we will show that $B$ normalizes some
    $A\in\A$ that also contains
    $[B,B]$ but not~$B$.  Among all members of~$\A$
    that contain $[B,B]$ but not~$B$, and lie 
    in~$\gend{U^S}$, choose $A$ with $|A\cap B|$
    maximal.  Supposing that $B$ 
    does not normalize~$A$, we will derive a contradiction.

    Let $b$ and $A^*$ be as in Theorem~\ref{ThmReplacement}.
    In particular, $A^*$ is abelian and lies in
    $AA^b\subgp\gend{U^S}\subgp D$.
    So $A^*\in\Ab(D)$.
    $A^*$ contains $[B,B]$ 
    but not $B$ by \eqref{ItemReplacementPersistence} and 
    \eqref{ItemReplacementIntersectionGrows}.
    \eqref{ItemReplacementIntersectionGrows} also
    implies $|A^*\cap B|>|A\cap B|$, so
    the maximality in our choice of~$A$ forces
    $A^*\notin\A$.

    This is a contradiction because  $A^*\in\A$
    by other parts of Theorem~\ref{ThmReplacement}.  
    For $\Ao$ we use $|A^*|=|A|$.
    For $\Ar$ we use $\rank(A^*)\geq\rank(A)$.  
    For $\Ae$ we use both of these properties.
    For $\Alex$ we use $A^*\geq_\lex A$.
    For $\Aolex$ we use this
    and
    $|A^*|=|A|$. 
    For $\AOEzeta$ we use 
    $|A^*|=|A|$, $\exponent(A^*)\leq\exponent(A)$
    and $A^*\geq_\lex A$.
\end{proof}

In fact we have proven that~$\A$ satisfies a
strengthening of the replacement
axiom, got by removing ``with class${}\leq2$'' from the
statement of the axiom.  
We stated the axiom the way we did because
only the class${}\leq2$ case is needed to prove 
Theorem~\ref{ThmAxiomaticZJ}.

\section{Proof of the Axiomatic ZJ Theorem}
\label{SecZJ}

\begin{proof}[Proof of Theorem~\ref{ThmAxiomaticZJ}]
We write $I$ for $\IA$.
    We prove $I\normal G$ by induction 
    starting with $1\normal G$, by establishing the
    following
    inductive step:
    \begin{center}
        if 
        $\exists\, W\normal G$ with $W<I$,
        then
        $\exists\, B\normal G$ with $W<B\subgp I$.
    \end{center}
    Fix such a $W$.  
    Since $S$ preserves $I$, it
    acts on $I/W$.
     We define
    $X\subgp I$ as the preimage of the fixed-point
    subgroup.  So $X$ is normal in $S$,
    and is strictly larger than $W$
    because $I/W\neq1$.  
    To complete the proof we will show that
    $B:=\gend{X^G}\normal G$ lies in $I$.
    Supposing to the contrary, we will derive
    a contradiction.

    {\it Step 1: $B\subgp O_p(G)$.} 
    Being a subgroup of $I$, $X$ is abelian.
    Together with $X\normal S$ this gives
    $[O_p(G),X,X]\subgp[X,X]=1$.  
    Because $G$ acts $p$-stably
    on $O_p(G)$,
    $X$ lies in $O_p(G\actson O_p(G))$.  
    This equals $O_p(G)$
    because $C_G(O_p(G))$ is a $p$-group by hypothesis.  
    Since $X$ lies in $O_p(G)$,
    so does $B=\gend{X^G}$.

    {\it Step 2: $[B,B]\subgp W\subgp Z(B)$.}
    By the definition of $X$, $S$ acts trivially on $X/W$.
    In particular $O_p(G)$ does.  Conjugation shows that
    $O_p(G)$ acts trivially 
    (mod~$W$) on every 
    $G$-conjugate of $X$, hence trivially 
    on $B/W$. 
    That is, $[B,O_p(G)]\subgp W$.  By
    step~1 this implies
    $[B,B]\subgp W$.
    And $W\subgp Z(B)$ because $B$ is generated by
    abelian groups that contain~$W$.

    {\it Step 3: Set $H=O_p(G\actson B)$ and 
    $P=H\cap S$.  Then
    some $A\in\A|_P$ fails to contain~$B$.}
    Because we are supposing 
    $B\not\subgp I$, 
    some $A\in\A$ fails to contain~$B$.  It does
    contain~$[B,B]$, because step~2
    showed $[B,B]$ lies in~$W$, which lies in~$I$, hence~$A$.
    Step~2 also showed that $B\normal S$ 
    has class${}\leq2$.
    By the replacement property, 
    some member of~$\A$ contains $[B,B]$ but not~$B$,
    and is also
    normalized by~$B$.  
    We lose nothing by using it in place of~$A$,
    because it has all the properties of~$A$ established so far.
    That is, we 
    may suppose  $B$ normalizes~$A$.
    By
    $[B,A,A]\subgp[A,A]=1$ and 
    the $p$-stability of $G$'s action
    on $B$, we have
    $A\subgp H$.  
    Together with $A\subgp S$ this gives $A\subgp P$,
    hence $A\in\A|_P$.

    {\it Step 4: $B\subgp I_{\!\A|P}$.}  
    By the Frattini argument and the definition of~$H$,
    $$
    G=HN_G(P)=C_G(B)PN_G(P)\subgp C_G(X)N_G(P).
    $$
    So the $G$-conjugates of $X$ are the same as the 
    $N_G(P)$-conjugates.  It therefore suffices 
    to show that
    every $N_G(P)$-conjugate of $X$
    lies in~$I_{\!\A|P}$.  
    This follows from 
    $$
    X\subgp I
    ={\bigcap}_{A'\in\A}A'
    \ \mathop{\subgp}_{\lower10pt\hbox to 0pt{\hss\ $\begin{matrix}\uparrow\\
    \hbox{\smaller by $\A\supseteq\A|_P\neq\emptyset$}\end{matrix}$ 
        \hss}}\ 
    {\bigcap}_{A'\in\A|_P} A'
    = I_{\!\A|P}
    \mathop{\normal}_{\lower10pt\hbox to0pt{\hss$\begin{matrix}\uparrow\\
    \hbox{\smaller by invariance}\end{matrix}$\hss}}
    N_G(P).
    $$

    {\it The contradiction.} By $B\subgp I_{\!\A|P}$,
    every member of $\A|_P$ contains~$B$.  But in
    step~3 we found one which does not.

    \smallskip
    The final claim follows from the
    Frattini argument: $\Aut G$ is generated by 
    inner automorphisms and automorphisms that 
    preserve~$S$.
\end{proof}

Our method of ``growing'' the normal subgroup 
from $W$ to~$B$ derives from Stellmacher's
construction \cite[Theorem 9.4.4]{KS}\cite{Stellmacher}
of a different subgroup of~$S$ that also has
the Glauberman property.

\section{Etc}
\label{SecEtc}

\noindent
Here we collect some results and examples we mentioned 
in passing.  First, we claimed in the introduction
that our ZJ-type
groups $\ZJlex$ and $\ZJolex$ 
can be strictly larger than $\ZJr$ and $\ZJo$, respectively.

%\begin{example}[$\ZJlex$ can be larger than $\ZJr$]
%    \label{EgBigZJlex}
%    The group
%    $$
%    S=\biggend{x,y,u\bigm|
%    1=[x,y]=x^9=y^3=u^3, x^u=xy, y^u=yx^3}
%    $$
%    is a semidirect product
%    $(\Z/9\times\Z/3)\semidirect \Z/3$.
%    For any $a$ in $A:=\gend{x,y}$ but outside 
%    $\gend{x^3}$, $C_S(a)=A$.
%    It follows that $Z(S)$ can be no larger than
%    $\gend{x^3}$.  
%    Therefore $A$ is the unique
%    abelian subgroup of~$S$ with order~$27$, because
%    its intersection with any other such subgroup
%    would be central in~$S$ and have order~$9$.
%    In particular, $\rank S=2$,
%    $\Alex(S)=\{A\}$ and $\ZJlex(S)=\Jlex(S)=A$.
%    But $\Ar(S)$ also contains
%    $\gend{x^3,u}$, so $\Jr(S)=S$ and 
%    $\ZJr(S)=\gend{x^3}$.
%    (If $p=2$, then $S$ collapses to~$D_8$.)
%\end{example}
%
%\begin{example}[$\ZJolex$ can be larger than $\ZJo$]
%    \label{EgBigZJolex}
%    Consider the 
%    ``Heisenberg group''
%    $$
%    S=\biggend{a,b,c\bigm| c=[a,b],\,1=[c,a]=[c,b]=a^9=b^9=c^9}.
%    $$
%    $\A_o(S)$ consists of the
%    preimages of the $13$ order~$9$ subgroups
%    of~$S/\gend{c}\iso(\Z/9)^2$.  So $\Jo(S)=S$ and $\ZJo(S)=\gend{c}$.
%    One member of $\A_o(S)$ is isomorphic to $(\Z/3)^2\times\Z/9$, 
%    namely $A:=\gend{a^3,b^3,c}$.  The rest are isomorphic
%    to $(\Z/9)^2$.  
%    So $\Aolex(S)=\{A\}$ and
%    $\Jolex(S)=\ZJolex(S)=A$.
%\end{example}

\begin{example}[$\ZJlex$ can be larger than $\ZJr$]
    \label{EgBigZJlex}
    Let $p$ be any odd prime.
    The group
    $$
    S=\biggend{x,y,u\bigm|
    1=[x,y]=x^{p^2}=y^p=u^p, x^u=xy, y^u=yx^p}
    $$
    is a semidirect product
    $(\Z/p^2\times\Z/p)\semidirect \Z/p$.
    (If $p=2$ then $S$ collapses to~$D_8$.)
    For any $a$ in $A:=\gend{x,y}$ but outside 
    $\gend{x^p}$, $C_S(a)=A$.
    It follows that $Z(S)$ can be no larger than
    $\gend{x^p}$.  
    Therefore $A$ is the unique
    abelian subgroup of~$S$ with order~$p^3$, because
    its intersection with any other such subgroup
    would be central in~$S$ and have order~$p^2$.
    In particular, $\rank S=2$,
    $\Alex(S)=\{A\}$ and $\ZJlex(S)=\Jlex(S)=A$.
    But $\Ar(S)$ also contains
    $\gend{x^p,u}\iso(\Z/p)^2$, so $\Jr(S)=S$ and 
    $\ZJr(S)=\gend{x^p}$.
\end{example}

\begin{example}[$\ZJolex$ can be larger than $\ZJo$]
    \label{EgBigZJolex}
    Let $p$ be any prime, and consider  
    the 
    ``Heisenberg group''
    $$
    S=\biggend{a,b,c\bigm| c=[a,b],\,1=[c,a]=[c,b]=a^{p^2}=b^{p^2}=c^{p^2}}.
    $$
    $\A_o(S)$ consists of the
    preimages of the $p^2+p+1$ order~$p^2$ subgroups
    of~$S/\gend{c}\iso(\Z/p^2)^2$.  So $\Jo(S)=S$ and $\ZJo(S)=\gend{c}$.
    One member of $\A_o(S)$ is isomorphic to $(\Z/p)^2\times\Z/p^2$, 
    namely $A:=\gend{a^p,b^p,c}$.  The rest are isomorphic
    to $(\Z/p^2)^2$, except if $p=2$, when there are also some
    isomorphic to $\Z/2\times\Z/8$.   So $A$ is the unique lex-maximal
    element of $\Ao(S)$, and
     $\Aolex(S)=\{A\}$ and
    $\Jolex(S)=\ZJolex(S)=A$.
\end{example}

In the introduction we mentioned $\OmegaZJr=\OmegaZJe$.
This is part of:

\begin{theorem}
    \label{ThmOmegaZJrisOmegaZJe}
    Suppose $p$ is a prime and $S$ is $p$-group.  Then
    $$
    \Ie(S)=\Ir(S)=\OmegaZJr(S)=\OmegaZJe(S)=\Omega C_S(\Jr(S))=\Omega C_S(\Je(S)).
    $$
\end{theorem}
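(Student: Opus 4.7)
The plan is to show all six subgroups coincide with a single group $I$, which I build starting from $\Ie(S)$.

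First I establish $\Ie(S)=\Ir(S)=:I$. The easy direction $\Ir(S)\sset\Ie(S)$ follows from $\Ae(S)\sset\Ar(S)$. For the reverse, I observe that for any $B\in\Ar(S)$, $\Omega_1(B)$ is elementary abelian with $\rank(\Omega_1(B))=\rank(B)=\rank(S)$, so $\Omega_1(B)\in\Ae(S)$. Hence $\Ie(S)\sset\Omega_1(B)\sset B$ for every such $B$, giving $\Ie(S)\sset\Ir(S)$. As a subgroup of any $A\in\Ae(S)$, $I$ is elementary abelian.

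Next I handle the easy containments. Since $I\sset A$ for every $A\in\Ar(S)$, $I$ both lies in and centralizes $\Jr(S)$, so $I\sset Z(\Jr(S))$; the same holds for $\Je(S)$. Since $I$ is elementary abelian, $I\sset\OmegaZJr(S)$ and $I\sset\OmegaZJe(S)$. Trivially $\OmegaZJr(S)\sset\Omega C_S(\Jr(S))$ and $\OmegaZJe(S)\sset\Omega C_S(\Je(S))$, and $\Je(S)\sset\Jr(S)$ yields $\Omega C_S(\Jr(S))\sset\Omega C_S(\Je(S))$.

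The only nontrivial step is $\Omega C_S(\Je(S))\sset I$. Suppose $x\in S$ has order${}\leq p$ and centralizes every $A\in\Ae(S)$. For each such $A$, the subgroup $\gend{x,A}$ is abelian and has exponent${}\leq p$, hence is elementary abelian; its rank is at most $\rank(S)=\rank(A)$, so $\gend{x,A}\in\Ae(S)$. Since all members of $\Ae(S)$ share the order $p^{\rank(S)}$, $\gend{x,A}=A$, forcing $x\in A$. Letting $A$ vary yields $x\in I$, which closes the diamond of inclusions.

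The main subtle point is that members of $\Ae(S)$ need not be maximal in $\Ab(S)$ (take $\Omega_1(S)$ in $S=\Z/p\times\Z/p^2$), so Lemma~\ref{LemIisZJisCJ} does not apply directly; the rank-and-$\Omega_1$ manipulations above substitute for the maximality-in-$\Ab(S)$ argument used in cases like $\Ao$.
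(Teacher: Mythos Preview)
Your proof is correct and follows essentially the same route as the paper: establish $\Ie(S)=\Ir(S)$ first, then close a cycle of inclusions ending with $\Omega C_S(\Je(S))\subgp\Ie(S)$. The paper labels this final inclusion ``standard, with proof similar to that of Lemma~\ref{LemIisZJisCJ}'' and leaves it at that; you spell out the argument explicitly and note why Lemma~\ref{LemIisZJisCJ} itself does not apply, which is a welcome clarification.
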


\begin{proof}
    First, $\Ir(S)\subgp\Ie(S)$ because $\Ae(S)\sset\Ar(S)$.
    And $\Ie(S)\subgp\Ir(S)$ because every member of
    $\Ar(S)$ contains a member of~$\Ae(S)$, hence
    $\Ie(S)$.   We have proven the first equality.  For the others it
    is enough to establish the inclusions:
    $$
    \begin{tikzcd}[row sep=4ex,column sep=1.5em]
        \Ie(S) %= 
        \ar[r,equal]
        \ar[d,hook,"\mathrm{obvious}"]
        & \Ir(S)
        \ar[r,hook]
        & \OmegaZJr(S)
        \ar[rr,hook,"\mathrm{obvious}"]
        &&
        \Omega C_S(\Jr(S))
        \ar[d,hook,"\mathrm{by}\ \Je(S)\subgp\Jr(S)"]
        \\
        \OmegaZJe(S)
        \ar[rrrr,hook,"\mathrm{obvious}"]
        &&& 
        &\Omega C_S(\Je(S))
        \ar[r,hook]
        & \Ie(S).
    \end{tikzcd}
    $$
    The unlabeled inclusion in the top row 
is obvious, except for the fact that $\Ir(S)$ has exponent${}\leq p$,
    which holds by $\Ir(S)=\Ie(S)$.   
    The unlabeled inclusion in the bottom row is standard,
    with proof similar to that of Lemma~\ref{LemIisZJisCJ}.
\end{proof}

Just before Lemma~\ref{LemReplacement}, we mentioned that 
``normalizers grow'' during the Thompson-Glauberman replacement
process, and that this
forces abelian subgroups of~$S$ with ``large'' normalizers
to centralize $D^*(S)$.  Here $D^*(S)$ is the
characteristic subgroup introduced
by Glauberman and Solomon \cite{GS}, who
gave a lovely proof that it has the Glauberman
property.
Following Bender, 
$D^*(S)$ may be defined as the (unique)
largest normal subgroup of $S$ with the property that
it centralizes every abelian subgroup of~$S$ that it normalizes.
(It is easy to see that this exists.  And considering how
it acts, on abelian normal subgroups of itself, leads to a proof
that $D^*(S)$ is abelian.)  

\begin{theorem}
    \label{ThmCentralizesDstar}
    Suppose $p$ is a prime, $S$ is a $p$-group, and $A\in\Ab(S)$.
    Also suppose
    $N_S(A)$ is maximal under inclusion, among all
    groups $N_S(A^*)$ where $A^*\in\Ab(S)$ has the same order as
    $A$.
    Then $A$ centralizes $D^*(S)$.
\end{theorem}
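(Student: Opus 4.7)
The plan is to deduce the theorem directly from Theorem~\ref{ThmReplacement}\eqref{ItemReplacementNormalizersGrow} by setting $B:=D^*(S)$ and arguing by contradiction. The key identity linking $D^*(S)$ to the replacement machinery is the equivalence, for this specific $B$, of ``$A$ centralizes $B$'' and ``$B$ normalizes $A$''. One direction is trivial: centralizing implies normalizing. The other direction is precisely the defining property of $D^*(S)$ recalled just before the theorem: $D^*(S)$ centralizes every abelian subgroup of $S$ that it normalizes, so if $B=D^*(S)$ normalizes the abelian group $A$, then $[A,B]=1$.

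With this equivalence in hand, suppose for contradiction that $A$ does not centralize $D^*(S)$. Then $B=D^*(S)$ does not normalize $A$. First I would check that the hypotheses of Theorem~\ref{ThmReplacement} are satisfied: $B\normal S$ because $D^*(S)$ is characteristic in $S$; $B$ is abelian (hence in particular abelian if $p=2$, as required); and $A\subgp S$ is abelian with $[B,B]=1\subgp A$ automatically. Thus the replacement theorem applies and produces some $b\in N_B(N_S(A))-N_B(A)$ together with $A^*=(A\cap A^b)[A,b]$.

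By parts \eqref{ItemReplacementSameOrder} and \eqref{ItemReplacementPersistence} of Theorem~\ref{ThmReplacement}, $A^*$ is abelian (hence lies in $\Ab(S)$) and has $|A^*|=|A|$, so $N_S(A^*)$ is one of the groups being compared in the maximality hypothesis on $N_S(A)$. But part~\eqref{ItemReplacementNormalizersGrow} gives $N_S(A^*)\supsetneq N_S(A)$, contradicting that maximality. Hence the supposition fails and $A$ centralizes $D^*(S)$.

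I do not expect any real obstacle here: essentially all the work is packaged into Theorem~\ref{ThmReplacement}\eqref{ItemReplacementNormalizersGrow} and into the abelianness of $D^*(S)$. The only point requiring care is the logical equivalence ``$A$ centralizes $D^*(S)$ iff $D^*(S)$ normalizes $A$'', which I would state explicitly at the start of the proof so that the application of replacement reads cleanly.
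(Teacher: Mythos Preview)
Your argument is correct and matches the paper's own proof: set $B=D^*(S)$, use abelianness of $D^*(S)$ to satisfy the hypotheses of Theorem~\ref{ThmReplacement}, and invoke part~\eqref{ItemReplacementNormalizersGrow} to contradict the maximality of $N_S(A)$ unless $D^*(S)$ normalizes (hence centralizes) $A$. The paper phrases this as ``arguing as for Theorem~\ref{ThmReplacementExamples}'', but the content is exactly what you wrote.
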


\begin{proof}
    We apply our replacement theorem with $B$ equal to 
    the abelian group $D^*(S)\normal S$.  
    Arguing as for  Theorem~\ref{ThmReplacementExamples}
    shows that $D^*(S)$ normalizes~$A$.  
    So, by (Bender's)  definition, $D^*(S)$ centralizes~$A$.

    (If $p>2$ then this argument gives a slightly stronger
    result, with $A^*$ varying over fewer elements of $\Ab(S)$,
    namely those that
    satisfy $|A^*|=|A|$
    and also
            $\omega_i(A^*)\geq\omega_i(A)$ for all~$i$.)
\end{proof}

%\begin{example}[Failure of replacement for $\Armax$]
%    $\A_{\mathrm{rmax}}$ means the set of abelian
%    subgroups that have largest possible rank and
%   are maximal under inclusion.
%    Take $p$ odd,
%    $B$ to be $\Z/p^2\Z\times\Z/p\Z$, and $a$ to be
%    a generator
%    for a copy of $\Z/p\Z$.  Define
%    $S=B\semidirect\gend{a}$ where $a$ conjugates
%    $(1,0)$ to $(1,1)$ and $(0,1)$ to $(3,1)$.  The
%    center of~$S$ is generated by $(3,0)$, and 
%    we use bars to indicate images in $\Sbar=S/Z(S)$,
%    which is a Heisenberg group.  The
%    rank of~$S$ is~$2$.  The rank~$2$ maximal
%    abelian subgroups
%    are $B$ and the preimages of the cyclic subgroups 
%    of~$\Sbar$ that do not lie in~$\Bbar$.  
%    Not all of these contain~$B$. The replacement
%    axiom would say that $B$ normalizes one that does not
%    contain~$B$.  But the only one it normalizes is
%    $B$ itself.
%
%    Taking $A$ to be
%    one of those that does not normalize~$B$,
%    the construction of~$A^*$ in theorem~\ref{xx}
%    still works.  But $A^*$ turns out to be $\Omega(B)$,
%    which is not maximal among abelian groups.
%    And enlarging it loses the does-not-contain-$B$
%    property.
%\end{example}

\end{document}